\documentclass[reqno, 12pt]{amsart}

\usepackage{cite}
\usepackage{graphicx}
\usepackage[a4paper,centering, scale=0.8]{geometry}
\date{}

\allowdisplaybreaks[4]
\theoremstyle{plain}
\newtheorem{thm}{Theorem}[section]

\newtheorem{lem}[thm]{Lemma}
\newtheorem{cor}[thm]{Corollary}

\theoremstyle{remark}
\newtheorem{rem}{Remark}[section]

\theoremstyle{definition}

\usepackage[colorlinks,linkcolor=blue,urlcolor=red,linktocpage]{hyperref}

\numberwithin{equation}{section}

\begin{document}
\title
{a Pr\'{e}kopa-Leindler type inequality related to the $L_p$  Brunn-Minkowski inequality}

\author[Y. Wu]{Yuchi Wu}
\address[Y.C. Wu]{School of Mathematics science, East China Normal University, Shanghai 200241, China; Shanghai Key Laboratory of Pure Mathematics and Mathematical Practice, Shanghai 200241, China}
 \email{\href{mailto: Yuchi Wu
<wuyuchi1990@126.com>}{wuyuchi1990@126.com}}

\begin{abstract}
In this paper, we prove a Pr\'{e}kopa-Leindler type inequality related to the $L_p$ Brunn-Minkowski inequality. It extends an inequality proved by Das Gupta \cite{MR588074} and  Klartag \cite{MR2349606}, and thus  recovers the Pr\'{e}kopa-Leindler inequality. In addition, we prove a functional $L_p$ Minkowski inequality.
\end{abstract}

\subjclass[2000]{52A40,~39B62,~26B25.}

\keywords{$L_p$ Brunn-Minkowski inequality, $M$-addition, $s$-concave}

\thanks{The author is supported by Project funded by China Postdoctoral Science Foundation 2019TQ0097, NSFC 11671249 and  a research grant from Shanghai Key Laboratory of PMMP 18dz2271000.}

\maketitle

\section{Introduction}

One of the cornerstones of the Brunn-Minkowski theory is the celebrated Brunn-Minkowski inequality (see, e.g., the books by Gardner \cite{MR2251886}, Gruber \cite{MR2335496} and Schneider \cite{MR3155183} for references). It has had far reaching consequences for subjects quite distant from geometric convexity. For this, see the wonderful survey by Gardner \cite{MR1898210}. By the middle of the last century, the Brunn-Minkowski inequality had been successfully extended to nonconvex sets.

\begin{thm}[the Brunn-Minkowski inequality]\label{TBMI}Let $K$ and $L$ be nonempty bounded measurable
sets in $n$-dimensional Euclidean space $\mathbb{R}^{n}$ such that $(1-\lambda)K + \lambda L$ is also measurable. Then
\begin{equation}\label{BMI}
V_{n}((1-\lambda)K + \lambda L)^{\frac{1}{n}} \geq (1-\lambda)V_{n}(K)^{\frac{1}{n}}+\lambda V_{n}(L)^{\frac{1}{n}}.
\end{equation}
\end{thm}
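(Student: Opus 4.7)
The plan is to prove the inequality in three stages: axis-aligned boxes, finite unions of boxes, and then general bounded measurable sets by approximation. First I would dispose of the degenerate case where one of $V_n(K)$ or $V_n(L)$ is zero; using $(1-\lambda)K+\lambda L \supseteq (1-\lambda)K+\lambda\{x_0\}$ for any $x_0 \in L$ and the translation--dilation invariance of volume, the inequality reduces to the trivial statement that $V_n((1-\lambda)K)^{1/n} = (1-\lambda)V_n(K)^{1/n}$.

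Assuming both volumes are positive, I would next treat axis-aligned boxes $K = \prod_{i=1}^n [0,a_i]$ and $L = \prod_{i=1}^n [0,b_i]$. In this case $(1-\lambda)K+\lambda L$ is the box $\prod_{i=1}^n [0,(1-\lambda)a_i+\lambda b_i]$, so the desired inequality becomes
$$\Bigl(\prod_{i=1}^n ((1-\lambda)a_i+\lambda b_i)\Bigr)^{1/n} \geq (1-\lambda)\Bigl(\prod_{i=1}^n a_i\Bigr)^{1/n} + \lambda\Bigl(\prod_{i=1}^n b_i\Bigr)^{1/n},$$
which is a direct consequence of the AM--GM inequality applied to the ratios $a_i/((1-\lambda)a_i+\lambda b_i)$ and $b_i/((1-\lambda)a_i+\lambda b_i)$ after dividing both sides by the left-hand side.

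The main step is a Hadwiger-type induction on the total number of boxes in $K$ and $L$, assuming both are finite disjoint unions of boxes. Given $K$ with at least two boxes, I would choose a coordinate hyperplane $H=\{x_k=t\}$ separating two of them, so that $K^{\pm} = K\cap\{\pm(x_k-t)\geq 0\}$ are each unions of strictly fewer boxes. By translating $L$ along the $x_k$-axis (which preserves $V_n(L)$ and $V_n((1-\lambda)K+\lambda L)$), I can pick $t'\in\mathbb{R}$ so that the pieces $L^\pm$ satisfy $V_n(L^\pm)/V_n(L)=V_n(K^\pm)/V_n(K)$. The crucial geometric observation is that $(1-\lambda)K^++\lambda L^+$ and $(1-\lambda)K^-+\lambda L^-$ lie on opposite sides of the hyperplane $\{x_k=(1-\lambda)t+\lambda t'\}$ and so overlap only in a set of measure zero; applying the inductive hypothesis to each and recombining, one checks algebraically that the common factor cancels, yielding the inequality for the union.

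The passage to arbitrary bounded measurable $K,L$ is then by inner regularity of Lebesgue measure: approximate by finite unions of dyadic boxes with measures tending to $V_n(K)$ and $V_n(L)$, use the inclusion of Minkowski sums, and take the limit. The main obstacle I anticipate is the disjointness argument in the inductive step: one must verify carefully that the translation $t'$ really can be chosen to equalize the measure ratios (a continuity/intermediate-value argument), and that the resulting Minkowski sum pieces are genuinely separated by a hyperplane so the volumes add. Once that bookkeeping is in place the remaining estimates amount to a homogeneity rearrangement and the one-dimensional AM--GM inequality $(1-\lambda)\alpha+\lambda\beta\geq \alpha^{1-\lambda}\beta^\lambda$.
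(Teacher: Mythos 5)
The paper does not prove Theorem \ref{TBMI} at all: it is quoted as classical background, with the reader referred to the survey of Gardner \cite{MR1898210} and the books \cite{MR2251886,MR2335496,MR3155183}, so there is no in-paper argument to compare yours against. What you propose is the standard Hadwiger--Ohmann proof, and it is essentially correct: the degenerate case via a single translate, the box case via AM--GM after normalizing by the left-hand side, the induction on the total number of boxes using a separating coordinate hyperplane for $K$ and an intermediate-value choice of $t'$ equalizing the volume ratios of $L^{\pm}$, and the observation that $(1-\lambda)K^{\pm}+\lambda L^{\pm}$ lie in the two closed half-spaces bounded by $\{x_k=(1-\lambda)t+\lambda t'\}$ so their volumes add. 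The one place your sketch needs repair is the final approximation: a bounded measurable set (e.g.\ a product of fat Cantor sets) need not contain any box, so you cannot inner-approximate directly by finite unions of dyadic boxes inside $K$ and $L$. The standard fix is a two-stage limit: first pass from finite unions of boxes to compact sets by \emph{outer} approximation (the level-$j$ dyadic cubes meeting a compact set decrease to it, and their Minkowski combination decreases to $(1-\lambda)K+\lambda L$ by compactness), and only then use inner regularity by compact subsets, together with monotonicity of the Minkowski sum, to reach general bounded measurable $K,L$; this is also where the hypothesis that $(1-\lambda)K+\lambda L$ is measurable is consumed. With that adjustment your argument is the canonical proof of \eqref{BMI}.
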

Here, $V_{n}$ denotes the $n$-dimensional Lebesgue measure and $K+L=\{x+y: x \in$ $K, y \in L\}$ is the Minkowski sum of $K$ and $L$.

Denote by $\int  f$ the integral of a function $f$ on its domain with respect to the Lebesgue measure. The following Pr\'{e}kopa-Leindler inequality \cite{MR404557} is a functional type of the Brunn-Minkowski inequality (\ref{BMI}), see Pr\'{e}kopa \cite{MR315079,MR404557,MR480923} and Leindler \cite{MR2199372}.

\begin{thm}[the Pr\'{e}kopa-Leindler inequality]\label{PLI}
 Let $0<\lambda<1$ and let $f, g$ and $h$ be nonnegative integrable functions on $\mathbb{R}^{n}$ satisfying
\[
h((1-\lambda) x+\lambda y) \geq f(x)^{1-\lambda} g(y)^{\lambda}
\]
for all $x, y \in \mathbb{R}^{n} .$ Then
\[
\int  h   \geq\left(\int  f  \right)^{1-\lambda}\left(\int  g  \right)^{\lambda}.
\]
\end{thm}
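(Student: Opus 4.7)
The plan is to prove the Pr\'{e}kopa--Leindler inequality by induction on the dimension $n$, using the one-dimensional Brunn--Minkowski inequality (the $n=1$ case of Theorem~\ref{TBMI}) to handle the base case and Fubini's theorem to reduce from dimension $n$ to dimension $n-1$.

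For the base case $n=1$, I would first dispose of the trivial situations $\int f = 0$ or $\int g = 0$ and then normalize by rescaling $f, g, h$ so that $\sup f = \sup g = 1$; this preserves the pointwise hypothesis and, by multiplicativity, is equivalent to the general conclusion. For each $t \in (0,1)$ the super-level sets $F_t = \{f \geq t\}$ and $G_t = \{g \geq t\}$ are nonempty, and the pointwise assumption forces
\[
(1-\lambda)F_t + \lambda G_t \subseteq \{h \geq t\},
\]
since $f(x)^{1-\lambda} g(y)^{\lambda} \geq t$ whenever $x \in F_t$ and $y \in G_t$. The one-dimensional Brunn--Minkowski inequality applied to these level sets gives $|\{h\geq t\}| \geq (1-\lambda)|F_t| + \lambda |G_t|$; integrating in $t$ via Cavalieri's principle and invoking the arithmetic--geometric mean inequality yields
\[
\int h \;\geq\; (1-\lambda)\int f + \lambda \int g \;\geq\; \Bigl(\int f\Bigr)^{1-\lambda}\Bigl(\int g\Bigr)^{\lambda}.
\]

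For the induction step, write $x = (x', x_n) \in \mathbb{R}^{n-1} \times \mathbb{R}$ and define the slice marginals
\[
F(s) = \int_{\mathbb{R}^{n-1}} f(x', s)\, dx', \quad G(t) = \int_{\mathbb{R}^{n-1}} g(y', t)\, dy', \quad H(r) = \int_{\mathbb{R}^{n-1}} h(z', r)\, dz'.
\]
For any $s, t$ with $r = (1-\lambda)s + \lambda t$, the sliced functions $f(\cdot, s)$, $g(\cdot, t)$, $h(\cdot, r)$ satisfy the Pr\'{e}kopa--Leindler hypothesis in $\mathbb{R}^{n-1}$, so the inductive hypothesis gives $H(r) \geq F(s)^{1-\lambda} G(t)^{\lambda}$. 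Applying the one-dimensional case to $F, G, H$ and using Fubini's theorem closes the induction.

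The main technical obstacle is measurability: even when $F_t$ and $G_t$ are Lebesgue measurable, their Minkowski combination $(1-\lambda)F_t + \lambda G_t$ need not be, and in the induction step the marginals $F, G, H$ are not \emph{a priori} measurable or integrable. I would circumvent this either by working with inner Lebesgue measure throughout (which is enough for the one-dimensional Brunn--Minkowski step) or by first reducing to the case of bounded, compactly supported, upper-semicontinuous data via standard approximation and then passing to the general case by monotone convergence.
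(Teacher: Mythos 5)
The paper never proves Theorem~\ref{PLI} directly: it is quoted as a classical result of Pr\'{e}kopa and Leindler, and the paper's own route to it is indirect, namely as a limiting case of Theorem~\ref{LpBMs} with $p=1$ (the Das Gupta--Klartag inequality), where one lifts $f$ to the body $\mathcal{K}_f\subset\mathbb{R}^{n+s}$, applies the $(n+s)$-dimensional Brunn--Minkowski inequality, and lets $s\to\infty$ so that the $s$-concave hypothesis and conclusion degenerate to their geometric-mean (log-concave) forms. Your proof is therefore a genuinely different, and in fact the classical, argument: induction on dimension, with the base case handled by the level-set inclusion $(1-\lambda)F_t+\lambda G_t\subseteq\{h\geq t\}$, the one-dimensional Brunn--Minkowski inequality, Cavalieri's principle, and AM--GM, and the inductive step by Fubini applied to the slice marginals. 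It is correct modulo the standard technical points you already flag (normalization should be by the essential suprema, with a truncation if these are infinite, and the measurability of Minkowski combinations and of the marginals must be handled by inner measure or approximation). What your approach buys is self-containedness and elementarity --- it needs only the one-dimensional additive inequality $|A+B|\geq|A|+|B|$ --- whereas the paper's lifting approach is less elementary but produces the whole one-parameter family of Borell--Brascamp--Lieb-type inequalities (indexed by $s$, and in this paper by $p$ as well), of which Pr\'{e}kopa--Leindler is only the endpoint; neither argument is circular with respect to the other, since the paper's machinery rests on Theorem~\ref{TLpBMI} rather than on Theorem~\ref{PLI}.
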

The Pr\'{e}kopa-Leindler inequality can quickly imply the Brunn-Minkowksi inequality (\ref{BMI}), see section 7 in \cite{MR1898210} for details. This connection helps trigger a fruitful development of functional analogues of several geometric parameters into the class of log-concave functions currently undergoing, see, e.g., \cite{MR3554706,MR3790496,MR932007,MR3147446,MR3886184,MR2163897,MR2386937,MR3695895,MR3077887,MR3837280}.

The following Borell-Brascamp-Lieb inequality \cite{MR404559,MR0450480} generalizes the Pr\'{e}kopa-Leindler inequality, which is just the case $ \alpha= 0$. %

\begin{thm}[the Borell-Brascamp-Lieb inequality]\label{BBLI} Let $0<\lambda<1,$ let $-1 / n \leq \alpha \leq \infty,$ and let $f, g,$ and $h$ be nonnegative integrable functions on $\mathbb{R}^{n}$ satisfying
\begin{equation}\label{BBLIcondition}
h((1-\lambda) x+\lambda y) \geq M_{\alpha}(f(x), g(y), \lambda)\end{equation}
for all $x, y \in \mathbb{R}^{n} .$ Then
\begin{equation}\label{BBLIconclusion}\int  h   \geq M_{\alpha /(n \alpha+1)}\left(\int  f  , \int  g  , \lambda\right)\end{equation}\end{thm}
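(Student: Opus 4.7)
I would prove the Borell--Brascamp--Lieb inequality by induction on the dimension $n$, following the classical scheme of Borell, Brascamp--Lieb, and Pr\'ekopa. The strategy is identical in spirit to the proof that the Pr\'ekopa--Leindler inequality (Theorem \ref{PLI}) follows from its one-dimensional version by Fubini-type slicing: reduce to dimension $1$, then iterate by slicing orthogonal to one coordinate axis. The exponent $\alpha/(n\alpha+1)$ arises naturally from one round of one-dimensional improvement applied $n$ times.

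For the base case $n=1$, assume first that $F:=\int f$ and $G:=\int g$ are positive and finite; trivial cases will be treated separately. The plan is to use the inverse distribution function trick. Define
\[
U(s)=\frac{1}{F}\int_{-\infty}^{s}f(u)\,du, \qquad V(s)=\frac{1}{G}\int_{-\infty}^{s}g(u)\,du,
\]
so that $U,V:\mathbb{R}\to[0,1]$ are nondecreasing, and set $x(t)=U^{-1}(t)$, $y(t)=V^{-1}(t)$, and $z(t)=(1-\lambda)x(t)+\lambda y(t)$ for $t\in(0,1)$. Then $z$ is nondecreasing, $x'(t)=F/f(x(t))$, $y'(t)=G/g(y(t))$, and the change of variables together with hypothesis \eqref{BBLIcondition} yields
\[
\int h \;\geq\; \int_{0}^{1} M_{\alpha}\bigl(f(x(t)),g(y(t)),\lambda\bigr)\cdot\left[(1-\lambda)\frac{F}{f(x(t))}+\lambda\frac{G}{g(y(t))}\right]dt.
\]
The proof of the base case is then reduced to the algebraic lemma
\[
M_{\alpha}(a,b,\lambda)\cdot M_{1}\!\left(\frac{F}{a},\frac{G}{b},\lambda\right)\;\geq\;M_{\alpha/(\alpha+1)}(F,G,\lambda),
\]
valid for $\alpha\geq -1$ and positive $a,b,F,G$. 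This is a H\"older-type inequality for $\alpha$-means, verified by the identity $1/\alpha+1/1=(\alpha+1)/\alpha$; it is really the only analytic content of the argument.

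For the inductive step, assuming the theorem in dimension $n-1$, write points of $\mathbb{R}^n$ as $(x',x_n)$ and define the marginals
\[
F(x_n)=\int_{\mathbb{R}^{n-1}}f(x',x_n)\,dx', \qquad G(y_n)=\int_{\mathbb{R}^{n-1}}g(y',y_n)\,dy',
\]
and similarly $H(z_n)$. For fixed $x_n,y_n$ with $z_n=(1-\lambda)x_n+\lambda y_n$, the slice hypothesis $h_{z_n}((1-\lambda)x'+\lambda y')\geq M_{\alpha}(f_{x_n}(x'),g_{y_n}(y'),\lambda)$ is inherited from \eqref{BBLIcondition}. Since $\alpha\geq -1/n\geq -1/(n-1)$, the induction hypothesis applied to the slices yields
\[
H(z_n)\;\geq\;M_{\alpha/((n-1)\alpha+1)}(F(x_n),G(y_n),\lambda).
\]
Now apply the one-dimensional case with exponent $\beta:=\alpha/((n-1)\alpha+1)$ to $F,G,H$; a direct computation shows $\beta/(\beta+1)=\alpha/(n\alpha+1)$, producing \eqref{BBLIconclusion} after Fubini identifies $\int H=\int h$, etc.

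The main obstacle is really bookkeeping rather than depth: one must verify the range of $\alpha$ carefully (so that $\beta\geq -1$ when $\alpha\geq -1/n$), handle the degenerate cases where $f$ or $g$ vanishes on sets of positive measure (by restricting $U,V$ to the essential support and extending $U^{-1},V^{-1}$ appropriately), and treat the endpoint cases $\alpha\in\{0,\pm\infty\}$ either as limits of the interior case or by separate direct arguments (the case $\alpha=0$ being exactly Theorem \ref{PLI} and $\alpha=\infty$ being essentially trivial). The algebraic H\"older-type inequality for means used in the base case is the only nontrivial analytic input; all other steps are routine.
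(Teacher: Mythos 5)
The paper does not prove Theorem \ref{BBLI}: it is quoted as a known background result, attributed to Borell and to Brascamp--Lieb, so there is no in-paper argument to compare yours against. Your proposal is the classical proof of the Borell--Brascamp--Lieb inequality (induction on dimension via marginals, with the one-dimensional case handled by the inverse-distribution-function transport argument and the H\"older-type inequality for means $M_{\alpha}(a,b,\lambda)\,M_{1}(c,d,\lambda)\geq M_{\alpha/(\alpha+1)}(ac,bd,\lambda)$ valid for $\alpha\geq -1$), and it is correct in outline. The exponent bookkeeping checks out: $\alpha\geq -1/n$ implies $\alpha\geq -1/(n-1)$ for the $(n-1)$-dimensional slices, the resulting $\beta=\alpha/((n-1)\alpha+1)$ satisfies $\beta\geq -1$, and $\beta/(\beta+1)=\alpha/(n\alpha+1)$ as required by \eqref{BBLIconclusion}. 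The points you defer as bookkeeping are indeed where the actual care lies: the change of variables $\int h\geq\int_0^1 h(z(t))z'(t)\,dt$ needs $z$ monotone and the identities $x'(t)=F/f(x(t))$, $y'(t)=G/g(y(t))$ hold only almost everywhere where $f\circ x$ and $g\circ y$ are positive; the marginals $F,G,H$ must be checked to inherit hypothesis \eqref{BBLIcondition} in the form needed (they do, by taking suprema over slices); and the degenerate cases $\int f=0$ or $\int g=0$ are immediate from the convention $M_{\alpha}(a,b,\lambda)=0$ when $ab=0$. None of this is a gap in the idea; it is the standard Borell/Brascamp--Lieb/Das Gupta argument, and it would serve as a legitimate self-contained proof of the cited theorem.
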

Here, for $\lambda\in (0,1)$, the $\alpha$-{\it means} $M_{\alpha}(a, b,{\lambda })$ of $a,b\geq 0$ is defined by
$$M_{\alpha}(a, b,{\lambda })=\left\{\begin{array}{ll}
\left((1-\lambda ) a^{\alpha}+\lambda  b^{\alpha}\right)^{1 / \alpha} & \text { if } \alpha \neq 0, \pm \infty \\
a^{1-\lambda } b^{\lambda } & \text { if } \alpha=0 \\
\max \{a, b\} & \text { if } \alpha=\infty \\
\min \{a, b\} & \text { if } \alpha=-\infty
\end{array}\right.$$
if $ab>0$, and $M_{\alpha}^{\lambda }(a, b)=0$ if $ab=0.$

Firey \cite{MR141003} generalizes Minkowski addition to $L_p$ addition for convex bodies (compact, convex subsets with nonempty interiors) containing the origin and proves the $L_p$ Brunn-Minkowski inequality, see also Lutwak \cite{MR1231704}.

If $K,L$ are two~convex bodies containing the origin, then the {\it $L_p$ sum} $K+_pL$ of $K$ and $L$ is defined by
\begin{equation}\label{lpsumconvexbody}
h_{K+_pL}(x)^p = h_K(x)^p + h_L(x)^p
\end{equation}
for all $x \in\mathbb{R}^n$, where $h_A(x)=\max\{x\cdot y:y\in A\}$ is the support function of $A$. Here, we denote by ``$\cdot$" the standard scalar product.

Lutwak, Yang and Zhang \cite{MR2873885} provided the {\it explicit pointwise formula of $L_p$ addition}:
\begin{equation}\label{pointwiselpsumconvexbody}
K+_{p} L=\left\{(1-\lambda)^{1 / q} x+\lambda^{1 / q} y: x \in K, y \in L, 0 \leq \lambda \leq 1\right\}
\end{equation}
where $q$ is the H\"{o}lder conjugate of $p,$ i.e. $\frac{1}{p}+\frac{1}{q}=1 .$ When $p=1, q=\infty,$ and $1 / q$ is defined as $0.$ It is worth to point out that the pointwise $L_p$ addition (\ref{pointwiselpsumconvexbody}) is suited for nonconvex sets.

Lutwak, Yang and Zhang \cite{MR2873885} established the following $L_p$ Brunn-Minkowski inequality for compact sets:~

\begin{thm}[Theorem 4, \cite{MR2873885}]\label{TLpBMI}
  Suppose $p \geq 1$. If $K$ and $L$ are nonempty compact sets in $\mathbb{R}^n$, then
\begin{equation}\label{LpBMI}  V_n(K+_pL)^\frac{p}{n}\geq V_n(K)^\frac{p}{n}+V_n(L)^\frac{p}{n}.\end{equation}
\end{thm}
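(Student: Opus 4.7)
The plan is to deduce the $L_p$ Brunn-Minkowski inequality (\ref{LpBMI}) directly from the classical Brunn-Minkowski inequality (Theorem \ref{TBMI}), exploiting the explicit pointwise formula (\ref{pointwiselpsumconvexbody}). The key structural observation is that (\ref{pointwiselpsumconvexbody}) expresses $K+_pL$ as a \emph{union} of ordinary (dilated) Minkowski sums parametrized by $\lambda\in[0,1]$. Specifically, for every fixed $\lambda\in[0,1]$,
\[
(1-\lambda)^{1/q}K+\lambda^{1/q}L\;\subseteq\;K+_pL,
\]
so in particular
\[
V_n(K+_pL)\;\geq\;V_n\!\left((1-\lambda)^{1/q}K+\lambda^{1/q}L\right).
\]
Since $K$ and $L$ are compact, so is each dilated Minkowski combination on the right, hence measurable; thus Theorem \ref{TBMI} applies (after rescaling the coefficients, using the homogeneity $V_n(tA)=t^n V_n(A)$).

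Carrying this out, I would write $a=V_n(K)^{1/n}$, $b=V_n(L)^{1/n}$, and apply the Brunn-Minkowski inequality in the form
\[
V_n\!\left((1-\lambda)^{1/q}K+\lambda^{1/q}L\right)^{1/n}\;\geq\;(1-\lambda)^{1/q}a+\lambda^{1/q}b,
\]
valid for every $\lambda\in[0,1]$. This step uses Theorem \ref{TBMI} with the scalar coefficients $(1-\lambda)^{1/q}$ and $\lambda^{1/q}$ pulled out by homogeneity (note that these two coefficients need not sum to one, which is why one cannot invoke Brunn-Minkowski in its ``normalized'' form directly, but the homogeneous version handles this without issue).

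The next step is to choose the optimal $\lambda$ so as to maximize the right-hand side. Differentiating $(1-\lambda)^{1/q}a+\lambda^{1/q}b$ in $\lambda$ and using $1/q-1=-1/p$, the critical point satisfies $\lambda/(1-\lambda)=(b/a)^{p}$, giving
\[
1-\lambda=\frac{a^{p}}{a^{p}+b^{p}},\qquad \lambda=\frac{b^{p}}{a^{p}+b^{p}}.
\]
Substituting back and using $p/q+1=p$, the maximum value collapses neatly to $(a^{p}+b^{p})^{1/p}$. Combined with the displayed inequality above, this yields
\[
V_n(K+_pL)^{1/n}\;\geq\;\bigl(V_n(K)^{p/n}+V_n(L)^{p/n}\bigr)^{1/p},
\]
which is equivalent to (\ref{LpBMI}) after raising both sides to the power $p$.

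The conceptual content is entirely in the first step, where one must recognize that (\ref{pointwiselpsumconvexbody}) lets us reduce $L_p$ addition to a one-parameter family of Minkowski sums; after that the inequality is essentially a calculus optimization. The only mild point requiring care is the degenerate case $p=1$ (so $1/q=0$), where (\ref{pointwiselpsumconvexbody}) must be interpreted with the convention $0^{0}=1$ so that the $L_{1}$ sum reduces to the ordinary Minkowski sum, and the boundary cases $V_n(K)=0$ or $V_n(L)=0$, which follow immediately from $K\subseteq K+_pL$ and $L\subseteq K+_pL$ (take $\lambda=0$ and $\lambda=1$ in (\ref{pointwiselpsumconvexbody})). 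I do not anticipate a substantive obstacle; the whole argument should fit in a few lines.
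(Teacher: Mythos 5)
Your argument is correct: the inclusion $(1-\lambda)^{1/q}K+\lambda^{1/q}L\subseteq K+_pL$ from (\ref{pointwiselpsumconvexbody}), the homogeneous form of Theorem \ref{TBMI}, and the optimization over $\lambda$ (whose optimum indeed collapses to $(a^p+b^p)^{1/p}$ via $p/q+1=p$) give exactly (\ref{LpBMI}), and measurability of $K+_pL$ is harmless since it is the continuous image of the compact set $[0,1]\times K\times L$. The paper itself does not prove this statement but imports it from Lutwak--Yang--Zhang \cite{MR2873885}, and your proof is essentially the argument given there, so there is nothing to compare beyond noting the match.
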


We consider the following problem: is there a Pr\'{e}kopa-Leindler type inequality that can simply imply the $L_p$ Brunn-Minkowski inequality (\ref{LpBMI})?

For any nonnegative function $f$ on $\mathbb{R}^n$, we denote $\operatorname{supp}f$ by the support of $f$, i.e., the closure of $\{x\in\mathbb{R}^n:f(x)>0\}$. In this paper, inspired by Klartag \cite{MR2349606}, we will prove the following theorem:

\begin{thm}\label{LpBMs}
  Let $p\geq 1,$ let $s,\mu,\omega>0,$ and let $f,g,h:\mathbb{R}^n\rightarrow [0,\infty)$ be integrable with nonepmty supports. If for all $x\in \operatorname{supp}f,y\in\operatorname{supp}g$ and $\lambda\in [0,1],$
  \begin{equation}\label{FLpBMIcondition}h((1-\lambda)^{\frac{1}{q}}\mu^\frac{1}{p} x+\lambda^{\frac{1}{q}}\omega^\frac{1}{p} y)^{\frac{1}{s}}\geq (1-\lambda)^{\frac{1}{q}}\mu^\frac{1}{p} f(x)^{\frac{1}{s}}+\lambda^{\frac{1}{q}}\omega^\frac{1}{p} g(y)^{\frac{1}{s}},\end{equation}
  where $q$ is the {H\"{o}lder} conjugate of $p$.
  Then,
  \begin{equation}\label{FLpBMIconclusion}
\left(\int  h\right)^{\frac{p}{n+s}} \geq \mu\left(\int  f\right)^{\frac{p}{n+s}}+\omega\left(\int  g\right)^{\frac{p}{n+s}}.
\end{equation}
\end{thm}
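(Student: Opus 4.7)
The plan is to derive Theorem \ref{LpBMs} from the Borell--Brascamp--Lieb inequality (Theorem \ref{BBLI}) applied to a rescaled version of the hypothesis, and then to optimize the resulting $\lambda$-dependent bound using the equality case of H\"{o}lder's inequality.

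First I fix $\lambda\in(0,1)$ and set
$$a = (1-\lambda)^{1/q}\mu^{1/p},\quad b = \lambda^{1/q}\omega^{1/p},\quad c = a+b,\quad t = b/c\in(0,1).$$
Since $ax+by=c\bigl((1-t)x+ty\bigr)$, the hypothesis (\ref{FLpBMIcondition}) becomes
$$H\bigl((1-t)x+ty\bigr)^{1/s} \;\geq\; (1-t)F(x)^{1/s} + t\,G(y)^{1/s},$$
where $H(z)=h(cz)$, $F=c^s f$, and $G=c^s g$. Raising to the $s$-th power gives precisely the Borell--Brascamp--Lieb hypothesis (\ref{BBLIcondition}) with $\alpha=1/s$, whose conjugate exponent $\alpha/(n\alpha+1)$ equals $1/(n+s)$. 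Applying Theorem \ref{BBLI} and invoking the scaling identities $\int H=c^{-n}\int h$, $\int F=c^s\int f$, $\int G=c^s\int g$, together with $(1-t)c=a$ and $tc=b$, everything collapses---once the powers of $c$ cancel---to
$$\left(\int h\right)^{1/(n+s)} \;\geq\; a\left(\int f\right)^{1/(n+s)} + b\left(\int g\right)^{1/(n+s)}.$$

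Raising to the $p$-th power and abbreviating $A=\mu^{1/p}\left(\int f\right)^{1/(n+s)}$, $B=\omega^{1/p}\left(\int g\right)^{1/(n+s)}$, this reads $\left(\int h\right)^{p/(n+s)}\geq \bigl[(1-\lambda)^{1/q}A+\lambda^{1/q}B\bigr]^p$ for every $\lambda\in(0,1)$. I then choose the distinguished value $\lambda^\ast=B^p/(A^p+B^p)$ at which H\"{o}lder's inequality with conjugate exponents $p,q$ attains equality; a direct computation (using $p/q+1=p$) gives $(1-\lambda^\ast)^{1/q}A+(\lambda^\ast)^{1/q}B=(A^p+B^p)^{1/p}$, and hence $\left(\int h\right)^{p/(n+s)}\geq A^p+B^p=\mu\left(\int f\right)^{p/(n+s)}+\omega\left(\int g\right)^{p/(n+s)}$, as required.

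The main obstacle is identifying the change of variables that converts the $L_p$-weighted hypothesis (\ref{FLpBMIcondition}) into a bona fide BBL hypothesis: the factor $c=a+b$ enters multiplicatively on both sides of (\ref{FLpBMIcondition}), and so can be absorbed by a simultaneous dilation of $h$'s argument and a $c^s$-rescaling of $f$ and $g$. Once this observation is in hand, the remainder is a direct application of Theorem \ref{BBLI} followed by an elementary H\"{o}lder-type optimization. The boundary cases $\lambda\in\{0,1\}$, together with the degenerate cases $\int f=0$ or $\int g=0$ (and the convention $M_\alpha=0$ when $ab=0$), are standard and require only a straightforward limiting argument.
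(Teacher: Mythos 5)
Your argument is correct, and it takes a genuinely different route from the paper. The paper proves the theorem by lifting $f,g,h$ to the bodies $\mathcal{K}_f\subset\mathbb{R}^{n+s}$ for integer $s$, invoking the Lutwak--Yang--Zhang $L_p$ Brunn--Minkowski inequality for compact sets in dimension $n+s$ together with Lemma \ref{intkfg}, and then reaching general $s$ by tensorization (for rational $s=l/t$) and approximation (for irrational $s$). You instead freeze $\lambda$, absorb the common factor $c=(1-\lambda)^{1/q}\mu^{1/p}+\lambda^{1/q}\omega^{1/p}$ by the substitution $H(z)=h(cz)$, $F=c^sf$, $G=c^sg$, apply Borell--Brascamp--Lieb with $\alpha=1/s$ (your verification that $\alpha/(n\alpha+1)=1/(n+s)$ and that the bookkeeping of powers of $c$ collapses to $(\int h)^{1/(n+s)}\geq a(\int f)^{1/(n+s)}+b(\int g)^{1/(n+s)}$ is correct), and then optimize over $\lambda$ via the equality case of H\"older. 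This is shorter, treats all real $s>0$ uniformly with no integrality, tensorization, or limiting step in $s$, and trades the LYZ compact-set inequality for the more classical BBL inequality; notably, your optimization over $\lambda$ is the same device LYZ use to deduce their $L_p$ inequality from the classical Brunn--Minkowski inequality, so you are in effect running that reduction at the functional level. What the paper's route buys in exchange is the intermediate inequality (\ref{Lpf+pg}) for the operation $\oplus_{p,s}$, which is reused in the proof of the corollary on $\tilde{\mathcal{S}}_{p,s}(f;g)$; your argument yields the theorem but not that pointwise-supremum statement.

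One small caveat: when $\int f=0$ (or $\int g=0$), the BBL conclusion degenerates to $\int H\geq 0$, so your per-$\lambda$ inequality is not obtained from BBL in that case and cannot be recovered by letting $\lambda$ tend to an endpoint of inequalities you have already established. You do need a separate (easy) direct argument there: fix $x_0\in\operatorname{supp}f$, use (\ref{FLpBMIcondition}) to get $h((1-\lambda)^{1/q}\mu^{1/p}x_0+\lambda^{1/q}\omega^{1/p}y)\geq \lambda^{s/q}\omega^{s/p}g(y)$ for $y\in\operatorname{supp}g$, integrate in $y$, and let $\lambda\to 1^-$. You correctly flag the degenerate cases as routine, and they are, but it is worth recording that the fix is a fresh application of the hypothesis rather than a limit of the main estimate.
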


Let $\mu=\omega=1$ and $f=\chi_K,g=\chi_ L,h=\chi_{K+_p L}$, where $\chi _E$ denotes the
characteristic function of $E$. It follows from (\ref{pointwiselpsumconvexbody}) that (\ref{FLpBMIcondition}) holds. Thus, we obtain (\ref{LpBMI}) by letting $s\rightarrow 0^+$ in (\ref{FLpBMIconclusion}). This shows that Theorem \ref{LpBMs} can be viewed as a functional generalization of Theorem \ref{TLpBMI}.

Theorem \ref{LpBMs} for $p=1$ is proved by Das Gupta \cite{MR588074} and Klartag \cite{MR2349606}, which recovers the Pr\'{e}kopa-Leindler inequality, see Corollary 2.2 in \cite{MR2349606} for details.

Let $p=1$, $s=\frac{1}{\alpha}$ and $\mu+\omega=1$ in Theorem \ref{LpBMs}. When $f,g$ are positive in their supports and have positive integrals, Theorem \ref{LpBMs} and Theorem \ref{BBLI} coincide with each other for $\alpha\in(0,\infty)$.

This paper was completed and sumbitted to a Journal on February 18, 2020. We recently found that \cite{RoysdonXing} was published on Arxiv on April 20, 2020. Letting $\mu+\omega=1 $ in Theorem \ref{LpBMs}, we can obtain Theorem 2.1 of \cite{RoysdonXing}.

This paper is organized as follows: In section 2, some basic facts and definitions for quick
reference are provided. In section 3, some useful lemmas are given. In Section 4, we prove Threorem \ref{LpBMs} and a functional $L_p$
Minkowski inequality.

 \vskip 20pt
\section{Preliminaries}

In this section, we collect some terminologies and notations. We recommend  the
books of Gardner \cite{MR2251886}, Gruber \cite{MR2335496} and Schneider \cite{MR3155183} as excellent references on convex geometry.

For a nonempty set $M\subset \mathbb{R}^2$, the {\it $M$-addition} of two sets $K,L\subset \mathbb{R}^n$ is defined as
$$K\oplus_ML=\{ax+by:(a,b)\in M, x\in K,y\in L\},$$
see Protasov \cite{MR1486700}, Gardner,  Hug and Weil \cite{MR3120744}, and Mesikepp\cite{MR3508484}. If $M=\{(1,1)\},$ then $M$-addition is the classical Minkowski addition. For $p\geq 1$ and its H\"{o}lder conjugate $q$, if $M=\{(a,b):a^q+b^q=1,a\geq0,b\geq0\},$ then $M$-addition is the explicit pointwise formula of $L_p$ addition (\ref{pointwiselpsumconvexbody}).

For $0<s<\infty$, we say that $f:\mathbb{R}^n
\rightarrow [0,\infty)$ is {\it$s$-concave} if $\operatorname{supp}(f)$ is nonempty, compact and convex, and $f^\frac{1}{s}$ is concave, i.e., for all $x,y\in \operatorname{supp}f$ and $0\leq \lambda\leq 1$, we have
$$f(\lambda x+(1-\lambda) y) \geq\left[\lambda f(x)^{\frac{1}{s}}+(1-\lambda) f(y)^{\frac{1}{s}}\right]^s.$$
$s$-concave function has been studied by Avriel \cite{MR301151},  Borell \cite{MR404559}, Brascamp and Lieb \cite{MR404559,MR0450480}, Rotem \cite{MR3062743,MR3253780}. 
For any function $f:\mathbb{R}^n\rightarrow [0,\infty)$ and any integer $s>0$, Klartag \cite{MR2349606} defines
\begin{equation}\label{defitionKf}
\mathcal{K}_f=\left\{(x,y)\in\mathbb{R}^{n+s}=\mathbb{R}^n\times\mathbb{R}^s:x
\in\operatorname{supp}f,|y|\leq f(x)^\frac{1}{s}\right\}.
\end{equation}
where, for given $x \in\mathbb{R}^n$ and $y \in\mathbb{R}^s$, $(x, y)$ are coordinates in $\mathbb{R}^{n+s}$. 
This set $\mathcal{K}_f$ is nonempty and convex if and only if $f$ is $s$-concave.
The volume of $\mathcal{K}_f$ can be computed as
\begin{equation}\label{Kfvolume}
{V}_{n+s}\left(\mathcal{K}_{f}\right)=\int_{\operatorname{supp}f} \kappa_{s} \cdot\left(f^{\frac{1}{s}}(x)\right)^{s}  =\kappa_{s} \int f,
\end{equation}
where $\kappa_{s}=\frac{\pi^{s / 2}}{\Gamma\left(\frac{s}{2}+1\right)}$ is the volume of the $s$-dimensional Euclidean unit ball.

For positive number $s$, two functions $f,g:\mathbb{R}^n\rightarrow [0,\infty)$ with nonempty supports and nonempty set $M\subset \mathbb{R}^2$ with nonnegative coordinates, we define the function $f \oplus_{M,s} g$ as
$$\left[f \oplus_{M,s} g\right](z)=\sup \left\{\left(af(x)^{\frac{1}{s}}+bg(y)^{\frac{1}{s}}\right)^s:x\in \operatorname{supp}(f), y \in \operatorname{supp}(g),\\z=ax+by,(a,b)\in M \right\}$$
when $z\in \operatorname{supp}f\oplus_M\operatorname{supp}g.$ If $z\notin \operatorname{supp}f\oplus_M\operatorname{supp}g$, we set
$\left[f \oplus_{M,s} g\right](z)=0$.
This definition is motivated by Gardner and Kiderlen \cite{MR2349606}, and Klartag \cite{MR3853928} .

For $s>0$, two functions $f,g:\mathbb{R}^n\rightarrow [0,\infty)$ with nonempty supports, $p\geq 1$ and its  H\"{o}lder conjugate $q$, we define
$f \oplus_{p,s} g$ as $f \oplus_{M,s} g$ by taking $M=\{(a,b):a^q+b^q=1,a,b\geq0\},$ i.e.,
$$\begin{aligned}\left[f \oplus_{p,s} g\right](z)=\sup \Big\{&\left((1-\lambda)^\frac{1}{q}f(x)^{\frac{1}{s}}+\lambda^\frac{1}{q}g(y)^{\frac{1}{s}}\right)^s:x \in \operatorname{supp}(f), y \in \operatorname{supp}(g),\\&\lambda\in[0,1], z=(1-\lambda)^\frac{1}{q}x+\lambda^\frac{1}{q}y\Big \}\end{aligned}$$
when $z\in \operatorname{supp}f+_p\operatorname{supp}g.$ If $z\notin \operatorname{supp}f+_p\operatorname{supp}g$, we set
$\left[f \oplus_{p,s} g\right](z)=0$.

For $s>0$, $p\geq 1$,  $\lambda > 0$ and $f : \mathbb{R}^n\rightarrow [0,\infty)$, we define the function $\lambda \times_{p,s} f : \mathbb{R}^n \rightarrow [0,\infty)$ as
\begin{equation}\label{lambdatimes}
[\lambda  \times_{p,s} f] (x) = \lambda ^\frac{s}{p}f({\lambda^{-\frac{1}{p}}}{x}).
\end{equation}
Note that, condition (\ref{FLpBMIcondition}) implies
\begin{equation}\label{hgeqf+g}
h\geq [\mu\times_{p,s}f]\oplus_{p,s} [\omega \times_{p,s}g]
\end{equation}
pointwise.

If $s$ is an integer, it is easy to see that $\mathcal{K}_{\lambda \times_{p,s}f} = \lambda^\frac{1}{p} \mathcal{K}_f = \{\lambda^\frac{1}{p} y: y \in \mathcal{K}_f \}$. Thus,
\begin{equation}\label{lambdatimesv}
V_{n+s}(\mathcal{K}_{\lambda \times_{p,s}f}) = \lambda ^\frac{n+s}{p}V_{n+s}(\mathcal{K}_f).
\end{equation}

\section{some useful lemmas
}

Let $p\geq 1$. If $f$ is an s-concave function, so is
$\lambda\times_{p,s} f$ for $\lambda>0$. In addition, if $f, g$ are $s$-concave functions containing the origin in their supports, the function $f \oplus_{p,s}g$ is also $s$-concave and contain the origin in its support, which can be deduced from Lemma \ref{convex} by taking $M=\{(a,b):a^q+b^q=1,a,b\geq0\}$.

\begin{lem}\label{convex}Let $M\subset \mathbb{R}^2$ be a nonempty compact set with nonnegative coordinates and $M\neq \{(0,0)\}$. Let $f,g:\mathbb{R}^n\rightarrow [0,\infty)$ be s-concave functions where $s>0$. Then $f \oplus_{M,s} g$ is s-concave if one of the following conditions holds:

\noindent\emph{(i)}~M is convex.

\noindent\emph{(ii)}~$\operatorname{supp}f$ and $\operatorname{supp}g$ contain the origin.

\end{lem}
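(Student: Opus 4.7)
The plan is to verify the three conditions of $s$-concavity for $F := f \oplus_{M,s} g$: that its support is nonempty, compact, and convex, and that $F^{1/s}$ is concave on it. Setting $K := \operatorname{supp} f$ and $L := \operatorname{supp} g$ (both compact convex by the $s$-concavity of $f, g$), I first observe $\operatorname{supp} F \subseteq K \oplus_M L$, and this latter set is compact as the continuous image of $M \times K \times L$ under $(a, b, x, y) \mapsto ax + by$; nonemptiness is clear since $M \neq \{(0,0)\}$ and $K, L$ are nonempty. The heart of the proof is the concavity inequality $F((1-t) z_1 + t z_2)^{1/s} \geq (1-t) F(z_1)^{1/s} + t F(z_2)^{1/s}$, which also delivers convexity of the support.

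To establish this, I would fix $z_1, z_2$ with $F(z_i) > 0$ and $t \in (0, 1)$, and for $\epsilon > 0$ pick $(a_i, b_i) \in M$, $x_i \in K$, $y_i \in L$ realizing $z_i = a_i x_i + b_i y_i$ with $a_i f(x_i)^{1/s} + b_i g(y_i)^{1/s} \geq F(z_i)^{1/s} - \epsilon$. The task is to exhibit $(c, d) \in M$, $x \in K$, $y \in L$ with $cx + dy = (1-t) z_1 + t z_2$ and $c f(x)^{1/s} + d g(y)^{1/s}$ dominating the natural convex combination of the two realizing expressions above. In case \emph{(i)}, convexity of $M$ supplies the obvious choice $(c, d) := (1-t)(a_1, b_1) + t(a_2, b_2) \in M$, and $x, y$ are defined as the convex combinations of $x_1, x_2$ and $y_1, y_2$ with weights making $cx + dy = (1-t) z_1 + t z_2$; the inequality is then immediate from the componentwise $s$-concavity of $f$ and $g$, with degenerate cases $c = 0$ or $d = 0$ handled by continuity.

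Case \emph{(ii)} is more subtle, since $(\tilde c, \tilde d) := (1-t)(a_1, b_1) + t(a_2, b_2)$ need not lie in $M$. Here the plan is to locate $(c, d) \in M$ and $r \in (0, 1]$ with $(\tilde c, \tilde d) = r(c, d)$, and then compensate by setting $x := r \tilde x$, $y := r \tilde y$ where $\tilde x, \tilde y$ are the convex combinations from case (i). Star-shapedness of $K, L$ about the origin (a consequence of $0 \in K$, $0 \in L$ and convexity) keeps $x \in K$ and $y \in L$, while applying $s$-concavity of $f$ to $r \tilde x = r \tilde x + (1 - r) \cdot 0 \in K$ yields $f(r \tilde x)^{1/s} \geq r f(\tilde x)^{1/s}$, which exactly absorbs the scaling factor and reduces the bookkeeping to the case-(i) computation.

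The main obstacle in case \emph{(ii)} is producing such $(c, d) \in M$ and $r$. The cleanest reduction is to pass from $M$ to its star hull $M^\star := \{rm : r \in [0, 1],\, m \in M\}$; the identity $f \oplus_{M, s} g = f \oplus_{M^\star, s} g$ follows from the same scaling and $s$-concavity argument, and the required decomposition of $(\tilde c, \tilde d)$ holds automatically for elements of $M^\star$. In the concrete $L_p$ setting of Theorem~\ref{LpBMs}, where $M = \{(a, b) : a, b \geq 0,\, a^q + b^q = 1\}$, convexity of $(a, b) \mapsto a^q + b^q$ for $q \geq 1$ yields $\tilde c^q + \tilde d^q \leq 1$, so $r := (\tilde c^q + \tilde d^q)^{1/q} \in (0, 1]$ and $(c, d) := (\tilde c/r, \tilde d/r) \in M$ close the argument.
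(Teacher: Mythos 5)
Your treatment of case (i) is correct and essentially coincides with the paper's argument: take $(c,d)=(1-t)(a_1,b_1)+t(a_2,b_2)\in M$ and reweight $x_1,x_2$ and $y_1,y_2$ so that $cx+dy=(1-t)z_1+tz_2$, then use $s$-concavity of $f$ and $g$ componentwise. The preliminary steps (nonemptiness and compactness of the support, and the identification of $\operatorname{supp}(f\oplus_{M,s}g)$ with $\operatorname{supp}f\oplus_M\operatorname{supp}g$) also match the paper.

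In case (ii) there is a genuine gap. Your key claim is that $(\tilde c,\tilde d)=(1-t)(a_1,b_1)+t(a_2,b_2)$ can always be written as $r(c,d)$ with $r\in(0,1]$ and $(c,d)$ in $M$ (or in the star hull $M^\star$), i.e.\ that every convex combination of points of $M$ lies in $M^\star$. This fails for general nonempty compact $M$ with nonnegative coordinates: for $M=\{(1,0),(0,1)\}$ every point of $M^\star$ has a zero coordinate, while $(\tfrac12,\tfrac12)$ does not. In fact case (ii) of the lemma as stated fails for this $M$: taking $f=\chi_K$, $g=\chi_L$ with $K,L$ convex bodies containing the origin and $K\cup L$ non-convex gives $f\oplus_{M,s}g=\chi_{K\cup L}$, whose support is not convex. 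So no argument at this level of generality can close the gap; your retreat to the concrete $L_p$ set $M=\{(a,b):a,b\ge 0,\ a^q+b^q=1\}$, for which $M^\star$ is convex and the radial decomposition does exist, is the right move and yields a complete proof in the only case the paper actually uses. For comparison, the paper's own proof of (ii) rests on the same idea as yours --- absorb a scaling factor through the origin via $f(\alpha x)^{1/s}\ge\alpha f(x)^{1/s}$ --- but chooses $(a,b)=(\max\{a_1,a_2\},\max\{b_1,b_2\})$, which likewise need not belong to $M$ (not even when $M$ is the $L_p$ set), so it has the same defect. The hypothesis that is really needed in (ii) is something like $\operatorname{conv}(M)\subset M^\star$ (equivalently, convexity of $M^\star$), and under that hypothesis your argument is correct.
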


\begin{proof}

Set $h=\left[f \oplus_{M,s} g\right].$ Since $M\neq \{(0,0)\}$, we get that
${ h }$ is not identically zero. This gives that $\operatorname{supp} h$ is nonempty.

We turn to prove the compactness of $\operatorname{supp}{ h }$. It is equivalent to proving that $\{z:~{ h }(z)>0\}$ is bounded. By the definition of $h$, we have
\begin{equation}\label{zh(z)subset}
\{z:~{ h }(z)>0\}\subset\operatorname{supp}f\oplus_M \operatorname{supp}g.
\end{equation}
Since $\operatorname{supp}f,~M$~and $\operatorname{supp}g$ are all compact, $\operatorname{supp}f\oplus_M \operatorname{supp}g$ is compact. Therefore, we obtain that $\{z:~{ h }(z)>0\}$ is bounded. Therefore, $\operatorname{supp}h$ is compact.

We will prove that
\begin{equation}\label{suppf+Mg}
\operatorname{supp}h=\operatorname{supp}f\oplus_M \operatorname{supp}g.
\end{equation}
The compactness of $\operatorname{supp}f,~M$~and $\operatorname{supp}g$ gives that $\operatorname{supp}f\oplus_M \operatorname{supp}g$ is the closure of $\{x:f(x)>0\}\oplus_M\{y:g(y)>0\}$. It follows from our assumption of $M$ that
$$\{x:f(x)>0\}\oplus_M\{y:g(y)>0\}\subset\{z:h(z)>0\}.$$
Taking closure on both side gives
$$\operatorname{supp}f\oplus_M \operatorname{supp}g\subset\overline{\{z:h(z)>0\}}=\operatorname{supp}{h}.$$
Now, (\ref{zh(z)subset}) implies (\ref{suppf+Mg}).

Let $z_1,z_2\in \operatorname{supp}{ h }$ and $\theta\in(0,1).$ For given $\varepsilon>0,$ there exist $(a_1,b_1),(a_2,b_2)\in M,x_1,x_2\in\operatorname{supp} f,y_1,y_2\in\operatorname{supp}g$ with
\begin{equation}\label{z1z2}
z_1=a_1x_1+b_1y_1,z_2=a_2x_2+b_2y_2
\end{equation}
such that
\begin{equation}\label{leqf+g}\begin{aligned} h (z_1)^{\frac{1}{s}}-\varepsilon\leq a_1f(x_1)^{\frac{1}{s}}+b_1g(y_1)^{\frac{1}{s}},\\ h (z_2)^{\frac{1}{s}}-\varepsilon\leq a_2f(x_2)^{\frac{1}{s}}+b_2g(y_2)^{\frac{1}{s}}.\end{aligned}\end{equation}

It remains to prove that $\operatorname{supp}{ h }$ is convex and $h^\frac{1}{s}$ is concave in its support. By (\ref{suppf+Mg}), the former is equivalent to proving that there exist $(a,b)\in M, x\in\operatorname{supp} f,y\in\operatorname{supp}g$ such that
\begin{equation}\label{z1z2xy}
  (1-\theta)z_1+\theta z_2=ax+by.
\end{equation}
The latter is equivalent to proving
 \begin{equation}\label{fmsgconcaveinequality} h ((1-\theta)z_1+\theta z_2)^{\frac{1}{s}}
\geq(1-\theta)  h (z_1)^{\frac{1}{s}}+\theta h (z_2)^{\frac{1}{s}}.\end{equation}

(i) Since $\operatorname{supp} f$ and $\operatorname {supp} g$ are convex, let
\begin{equation}\label{xlambdaymu}x=(1-\lambda)x_1+\lambda x_2,y=(1-\mu)y_1+\mu  y_2,\end{equation}
where $\lambda,\mu\in[0,1]$ are to be determined. Then, by (\ref{z1z2}), (\ref{z1z2xy}) becomes
\begin{equation}\label{xy}\left\{\begin{aligned}
(1-\theta)a_1&=(1-\lambda)a,\\
\theta a_2&=\lambda  a,\\
(1-\theta)b_1&=(1-\mu) b,\\
\theta b_2&=\mu  b.
\end{aligned}\right.\end{equation}
This is equvalent to solve the system
$$\left\{\begin{aligned}(1-\theta)a_1+\theta a_2= a,\\
(1-\theta)b_1+\theta b_2=b.\end{aligned}\right.$$
Since $M$ is convex and $(a_1,b_1),(a_2,b_2)\in M$, one can find $(a,b)\in M$ that satisfies this system. Thus, (\ref{z1z2xy}) holds.

Therefore, by (\ref{z1z2xy}), the condition that $a,b\geq 0$, (\ref{xlambdaymu}), (\ref{xy}), (\ref{z1z2}) and (\ref{leqf+g}), we get
$$\begin{aligned}& h ((1-\theta)z_1+\theta z_2)^{\frac{1}{s}}\\\geq &af(x)^{\frac{1}{s}}+bg(y)^{\frac{1}{s}}\\
\geq &a(1-\lambda) f( x_1)^{\frac{1}{s}}+a\lambda f( x_2)^{\frac{1}{s}}+b(1-\mu) f( y_1)^{\frac{1}{s}}+b \mu f(y_2)^{\frac{1}{s}}\\
=&(1-\theta)a_1f( x_1)^{\frac{1}{s}}+(1-\theta)b_1f( y_1)^{\frac{1}{s}}+\theta a_2 f( x_2)^{\frac{1}{s}}+\theta b_2f(y_2)^{\frac{1}{s}}\\
\geq&(1-\theta)  h (z_1)^{\frac{1}{s}}+\theta h (z_2)^{\frac{1}{s}}-\varepsilon.\end{aligned}$$
Since $\varepsilon$ is arbitrary, (\ref{fmsgconcaveinequality}) holds.

(ii) Since  $\operatorname{supp} f$ and $\operatorname {supp} g$ are convex and contain the origin, let
\begin{equation}\label{alphabetaxlambdaymu}x=\alpha\left((1-\lambda)x_1+\lambda x_2\right),y=\beta\left((1-\mu)y_1+\mu  y_2\right),
\end{equation}
where $\lambda,\mu,\alpha,\beta\in[0,1]$ are to be determined. Similarly,
(\ref{z1z2xy}) becomes
\begin{equation}\label{alphabetaxy}\left\{\begin{aligned}
(1-\theta)a_1&=(1-\lambda)\alpha a,\\
\theta a_2&=\lambda \alpha  a,\\
(1-\theta)b_1&=(1-\mu)\beta b,\\
\theta b_2&=\mu\beta  b.
\end{aligned}\right.\end{equation}
This is equvalent to solve the system
$$\left\{\begin{aligned}(1-\theta)a_1+\theta a_2= \alpha a,\\
(1-\theta)b_1+\theta b_2=\beta b.\end{aligned}\right.$$
Set $a=\max\{a_1,a_2\},b=\max\{b_1,b_2\}$, then one can find $\alpha,\beta\in[0,1]$ that satisfy this system. Thus, (\ref{z1z2xy}) holds.

It follows from the fact that $f,g$ are s-concave and the assumption of (ii) that
$$f(\alpha x)^\frac{1}{s}\geq \alpha f(x)^\frac{1}{s}+(1-\alpha)f(o)^\frac{1}{s}\geq \alpha f(x)^\frac{1}{s}, $$
and
$$g(\beta y)^\frac{1}{s}\geq \beta g(y)^\frac{1}{s}+(1- \beta ) g(o)^\frac{1}{s}\geq\beta g(y)^\frac{1}{s},$$
where $o$ denotes the origin of $\mathbb{R}^n.$ Together with (\ref{z1z2xy}), the condition that $a,b\geq 0$, (\ref{alphabetaxlambdaymu}), (\ref{alphabetaxy}), (\ref{z1z2}) and (\ref{leqf+g}), we obtain
$$\begin{aligned}& h ((1-\theta)z_1+\theta z_2)^{\frac{1}{s}}\\\geq &af( x)^{\frac{1}{s}}+bg( y)^{\frac{1}{s}}
\\
\geq &(1-\lambda)a  f( \alpha x_1)^{\frac{1}{s}}+\lambda a f(\alpha  x_2)^{\frac{1}{s}}+(1-\mu) b f(\beta y_1)^{\frac{1}{s}}+\mu b   f(\beta y_2)^{\frac{1}{s}}
\\
\geq &(1-\lambda)a \alpha f( x_1)^{\frac{1}{s}}+\lambda a \alpha f( x_2)^{\frac{1}{s}}+(1-\mu) b\beta f( y_1)^{\frac{1}{s}}+\mu b  \beta f(y_2)^{\frac{1}{s}}\\
=&(1-\theta)a_1f( x_1)^{\frac{1}{s}}+(1-\theta)b_1f( y_1)^{\frac{1}{s}}+\theta a_2 f( x_2)^{\frac{1}{s}}+\theta b_2f(y_2)^{\frac{1}{s}}\\
\geq&(1-\theta)  h (z_1)^{\frac{1}{s}}+\theta h (z_2)^{\frac{1}{s}}-\varepsilon.\end{aligned}$$
Since $\varepsilon$ is arbitrary, (\ref{fmsgconcaveinequality}) holds.\end{proof}

\begin{lem}\label{intkfg} Let $s>0$ be an integer and let $M\subset \mathbb{R}^2$ be a nonempty set with nonnegative coordinates. Then, for any two functions $f,g:\mathbb{R}^n\rightarrow [0,\infty)$ with nonempty supports,
$$\mathcal{K}_f\oplus_M\mathcal{K}_g\subset \mathcal{K}_{f\oplus_{M,s}g}$$
and
$$\operatorname{int}\left(\mathcal{K}_f\oplus_M\mathcal{K}_g\right)=\operatorname{int}\mathcal{K}_{f\oplus_{M,s}g},$$
where $\operatorname{ int} A$ denotes the interior of a set $A$.

\end{lem}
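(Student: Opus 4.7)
The plan is to handle the two statements in sequence, with the second building on the first. Throughout, set $h = f \oplus_{M,s} g$.

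For the set inclusion $\mathcal{K}_f \oplus_M \mathcal{K}_g \subset \mathcal{K}_{h}$, I would take an arbitrary element $(z,w) = a(x_1,y_1) + b(x_2,y_2)$ with $(a,b) \in M$, $(x_1,y_1) \in \mathcal{K}_f$, $(x_2,y_2) \in \mathcal{K}_g$, and verify the two defining conditions of $\mathcal{K}_h$. The height bound is almost immediate: the triangle inequality combined with $|y_1|\leq f(x_1)^{1/s}$ and $|y_2|\leq g(x_2)^{1/s}$ yields
\[
|w| \leq a|y_1| + b|y_2| \leq a f(x_1)^{1/s} + b g(x_2)^{1/s} \leq h(z)^{1/s},
\]
where the last step is just the supremum in the definition of $h$. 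For the support condition $z \in \operatorname{supp}h$, I would approximate $x_1, x_2$ by sequences $x_1^k \in \{f>0\}$, $x_2^k \in \{g>0\}$ (which exist because $x_1 \in \operatorname{supp}f$ and $x_2 \in \operatorname{supp}g$); along $z^k = a x_1^k + b x_2^k$ one has $h(z^k)>0$ whenever $(a,b)\ne (0,0)$, so that $z = \lim_k z^k \in \operatorname{supp}h$.

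For the interior equality, the inclusion $\operatorname{int}(\mathcal{K}_f \oplus_M \mathcal{K}_g) \subset \operatorname{int}\mathcal{K}_h$ is automatic from what was just proved, so all the work lies in the reverse direction. Pick $(z_0, w_0) \in \operatorname{int}\mathcal{K}_h$; since a whole ball around $(z_0, w_0)$ lies in $\mathcal{K}_h$, perturbing only the $w$-coordinate radially along the direction of $w_0$ (or along any unit vector in $\mathbb{R}^s$ if $w_0=0$) forces the strict inequality $|w_0| < h(z_0)^{1/s}$. By the definition of $h$ as a supremum, there then exist $(a_0,b_0) \in M$ together with $x_0 \in \operatorname{supp}f$ and $y_0 \in \operatorname{supp}g$ satisfying $z_0 = a_0 x_0 + b_0 y_0$ and $a_0 f(x_0)^{1/s} + b_0 g(y_0)^{1/s} > |w_0|$. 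Setting $\lambda = |w_0|/(a_0 f(x_0)^{1/s} + b_0 g(y_0)^{1/s}) \in [0,1)$ and picking a unit vector $v$ in the direction of $w_0$ (any unit vector if $w_0=0$), the vectors $u_1 = \lambda f(x_0)^{1/s} v$ and $u_2 = \lambda g(y_0)^{1/s} v$ satisfy $(x_0, u_1) \in \mathcal{K}_f$, $(y_0, u_2) \in \mathcal{K}_g$, and $a_0 u_1 + b_0 u_2 = w_0$. Hence $(z_0, w_0) = a_0(x_0,u_1) + b_0(y_0,u_2) \in \mathcal{K}_f \oplus_M \mathcal{K}_g$. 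Since $\operatorname{int}\mathcal{K}_h$ is an open subset of $\mathcal{K}_f \oplus_M \mathcal{K}_g$, it is also contained in the interior of that set.

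The main obstacle I anticipate is producing the strict inequality $|w_0| < h(z_0)^{1/s}$ from the purely topological assumption that $(z_0, w_0)$ is interior to $\mathcal{K}_h$. Because $h$ need not be upper semicontinuous, this does not follow from pointwise reasoning but must be extracted by perturbing in the $w$-variable only, with the cases $w_0 \ne 0$ and $w_0=0$ treated separately. A secondary nuisance is the degenerate sub-case $(a,b)=(0,0)$ in the first inclusion, where the approximation argument for $z \in \operatorname{supp}h$ breaks down; this only matters if $M = \{(0,0)\}$, in which case $h\equiv 0$ and both sides of both claims are handled trivially.
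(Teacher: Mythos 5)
Your proof is correct and follows essentially the same route as the paper: the first inclusion is the direct triangle--inequality computation $|a y_1 + b y_2| \le a f(x_1)^{1/s} + b g(x_2)^{1/s} \le h(z)^{1/s}$, and the reverse inclusion for interiors uses the identical device of extracting the strict bound $|w_0| < h(z_0)^{1/s}$ from interiority, taking an approximate maximizer $(a_0,b_0,x_0,y_0)$, and splitting $w_0 = a_0 u_1 + b_0 u_2$ with $u_1, u_2$ proportional to $f(x_0)^{1/s}$ and $g(y_0)^{1/s}$ (the paper writes this as $x' = \frac{f(x)^{1/s}}{af(x)^{1/s}+bg(y)^{1/s}}z'$, which is your $\lambda f(x_0)^{1/s}v$). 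If anything you are more careful than the paper, which neither verifies $z \in \operatorname{supp}h$ in the first inclusion nor spells out the radial perturbation yielding the strict inequality; your only inaccuracy is the closing remark that the case $(a,b)=(0,0)$ ``only matters if $M=\{(0,0)\}$, in which case both claims are trivial'' --- in fact whenever $(0,0)\in M$ (alone or alongside other points) the first inclusion can genuinely fail, e.g.\ for $M=\{(0,0),(1,1)\}$ and $f=g=\chi_{[1,2]}$ one has $(0,0)\in\mathcal{K}_f\oplus_M\mathcal{K}_g$ but $0\notin\operatorname{supp}(f\oplus_{M,s}g)$, so this is a defect of the lemma's statement (silently ignored by the paper and irrelevant to its application, where $M$ excludes the origin) rather than something that can be ``handled trivially''.
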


\begin{proof}
First, we prove
\begin{equation}\label{Msubset1}
\mathcal{K}_f\oplus_M\mathcal{K}_g\subset \mathcal{K}_{f\oplus_{M,s}g}.
\end{equation}

Let $(x,x')\in \mathcal{K}_f,(y,y')\in\mathcal{K}_g,(a,b)\in M.$ Then
$$|x'|\leq f(x)^{\frac{1}{s}},|y'|\leq g(y)^{\frac{1}{s}},a,b\geq 0.$$
By the definition of $f\oplus_{M,s}g,$ we obtain
$$\begin{aligned}\left[f\oplus_{M,s}g\right](ax+by)^{\frac{1}{s}}&\geq
af(x)^{\frac{1}{s}}+bg(y^{\frac{1}{s}})\\
&\geq a|x'|+b|y'|\\
&\geq |ax'+by'|.
\end{aligned}$$
That is, $(ax+by,ax'+by')\in \mathcal{K}_{f\oplus_{M,s}g}$. Thus, (\ref{Msubset1}) holds.

It remains to prove
\begin{equation}\label{Msubset2}
\operatorname{int}\mathcal{K}_{f\oplus_{M,s}g}\subset \mathcal{K}_f\oplus_M\mathcal{K}_g.
\end{equation}

Without loss of generality, we can assume that $\operatorname{int}\mathcal{K}_{f\oplus_{M,s}g}$ is nonempty. Let $(z,z')\in \operatorname{int}\mathcal{K}_{f\oplus_{M,s}g}$. Then, there exists $\varepsilon>0,$ such that
$$\left[{f\oplus_{M,s}g}\right](z)^\frac{1}{s}-\varepsilon>|z'|.$$
By the definition of $f\oplus_{M,s}g,$ there exist $(a,b)\in M,x\in\operatorname{supp}f,y\in\operatorname{supp}g$ with $z=ax+by$ such that
$$  af(x)^\frac{1}{s}+bg(y)^\frac{1}{s}>
\left[{f\oplus_{M,s}g}\right](z)^\frac{1}{s}-\varepsilon.$$
Therefore, we get
\begin{equation}\label{Msubset3}
af(x)^\frac{1}{s}+bg(y)^\frac{1}{s}>|z'|.
\end{equation}

Set
$$x'=\frac{f(x)^\frac{1}{s}}{af(x)^\frac{1}{s}+bg(y)^\frac{1}{s}}z',y'=\frac{g(y)^\frac{1}{s}}{af(x)^\frac{1}{s}+bg(y)^\frac{1}{s}}z'.$$
By (\ref{Msubset3}),
$$(x,x')\in \mathcal{K}_f,(y,y')\in\mathcal{K}_g.$$
Thus$$
(z,z')=a(x,x')+b(y,y')\in\mathcal{K}_f\oplus_M\mathcal{K}_g.$$
Therefore, (\ref{Msubset2}) holds.
\end{proof}

\begin{rem}\label{f+pgsubset}Let $p\geq 1$. By
Lemma \ref{intkfg}, we can conclude that for any $\mu,\omega>0$ and integer $s>0$,
$$\mathcal{K}_{[\mu\times_{p,s}f]}+_p\mathcal{K}_{[\omega\times_{p,s}g]}\subset\mathcal{K}_{[\mu\times_{p,s}f]\oplus_{p,s} [\omega \times_{p,s}g]}$$
and
\begin{equation}\label{intKf+g} \operatorname{int} \left(\mathcal{K}_{[\mu\times_{p,s}f]}+_p\mathcal{K}_{[\omega\times_{p,s}g]}\right)=\operatorname{int} \mathcal{K}_{[\mu\times_{p,s}f]\oplus_{p,s} [\omega \times_{p,s}g]}.\end{equation}
\end{rem}

\section{proof of main theorems}

We turn to prove Theorem \ref{LpBMs}.

\begin{proof}[proof of Theorem \ref{LpBMs}]First assume that $s$ is an integer.

The $L_p$ Brunn-Minkowski inequality (\ref{LpBMI}) for $(n+s)$-dimensional sets, (\ref{lambdatimesv}) and Remark \ref{f+pgsubset} implies
$$\begin{aligned}V^\ast_{n+s}(\mathcal{K}_{[\mu\times_{p,s}f]\oplus_{p,s} [\omega \times_{p,s}g]})^\frac{p}{n+s}&\geq V_{n+s}(\mathcal{K}_{[\mu\times_{p,s}f]}+_p\mathcal{K}_{[\omega\times_{p,s}g]})^\frac{p}{n+s}
\\&\geq \mu V_{n+s}(\mathcal{K}_f)^\frac{p}{n+s}+\omega V_{n+s}(\mathcal{K}_g)^\frac{p}{n+s},\end{aligned}$$
where $V^\ast_{n+s}$ stands for outer Lebesgue measure (the set $\mathcal{K}_{[\mu\times_{p,s}f]\oplus_{p,s} [\omega \times_{p,s}g]}$ may be non-measurable).
By (\ref{Kfvolume}), this is equivalent to
\begin{equation}\label{Lpf+pg}\left(\int^{\ast}_{\mathbb{R}^n}[\mu\times_{p,s}f]\oplus_{p,s}[\omega\times_{p,s}g]\right)^{\frac{p}{n+s}}\geq \mu\left(\int_{\mathbb{R}^n}f\right)^\frac{p}{n+s}+\omega\left(\int_{\mathbb{R}^n}g\right)^\frac{p}{n+s},\end{equation}
where $\int^\ast$ is the outer integral. Note that $\int h=\int^{*} h$. Thus, it follows from (\ref{hgeqf+g}) and (\ref{Lpf+pg}) that (\ref{FLpBMIconclusion}) holds.

Next assume that $s=\frac{l}{t}$ is rational.

Note that, by H\"{o}lder's
inequality (See \cite{MR0046395}) and (\ref{FLpBMIcondition}), for any $x_1, \cdots, x_t, y_1, \cdots, y_t \in\mathbb{R}^n$,
\begin{equation}\label{sintegeri}
\begin{aligned}
(1-\lambda)^\frac{1}{q} \mu^\frac{1}{p}\prod_{i=1}^{t} f\left(x_{i}\right)^{\frac{1}{t{s}}}+\lambda^\frac{1}{q}\omega^\frac{1}{p} \prod_{i=1}^{t} g\left(y_{i}\right)^{\frac{1}{t{s}}} & \leq\left(\prod_{i=1}^{t}\left((1-\lambda)^\frac{1}{q}\mu ^\frac{1}{p} f\left(x_{i}\right)^{\frac{1}{s}}+\lambda^\frac{1}{q}\omega ^\frac{1}{p} g\left(y_{i}\right)^{\frac{1}{s}}\right)\right)^{\frac{1}{t}} \\
& \leq \prod_{i=1}^{t} h\left((1-\lambda)^\frac{1}{q}\mu^\frac{1}{p} x_{i}+\lambda^\frac{1}{q}\omega^\frac{1}{p} y_{i}\right)^{\frac{1}{t{s}}}.
\end{aligned}
\end{equation}
For a function $r : \mathbb{R}^n \rightarrow [0,\infty),$ we define $\tilde{r} : \mathbb{R}^{nt}\rightarrow[0,\infty)$ by $$\tilde{r}(x)=\tilde{r}\left(x_{1}, \ldots, x_{t}\right)=\prod_{i=1}^{t} r\left(x_{i}\right)$$ where $x = (x_1, \cdots, x_t) \in (\mathbb{R}^n)^t$ are
coordinates in $\mathbb{R}^{nt}$. Thus, (\ref{sintegeri}) implies that for $x, y \in\mathbb{R}^{nt}$,
$$\tilde{h}((1-\lambda)^{\frac{1}{q}}\mu^\frac{1}{p} x+\lambda^{\frac{1}{q}}\omega^\frac{1}{p} y)^{\frac{1}{ts}}\geq (1-\lambda)^{\frac{1}{q}}\mu^\frac{1}{p} \tilde{f}(x)^{\frac{1}{ts}}+\lambda^{\frac{1}{q}}\omega^\frac{1}{p} \tilde{g}(y)^{\frac{1}{ts}}.$$

Now, $ts=l$ is integer. This gives that
$$
\begin{aligned}
\left(\int h\right)^{\frac{p}{n+s}} &=\left(\int \tilde{h}\right)^{\frac{p}{t(n+s)}} \geq \mu\left(\int \tilde{f}\right)^{\frac{p}{t(n+s)}}+\omega\left(\int \tilde{g}\right)^{\frac{p}{t(n+s)}} \\
&=\mu\left(\int f\right)^{\frac{p}{n+s}}+\omega\left(\int  g\right)^{\frac{p}{n+s}}.
\end{aligned}
$$
The case that $s$ is irrational follows by a standard approximation argument.
\end{proof}

Using Theorem \ref{LpBMs}, we will prove a functional $L_p$
Minkowski inequality.

For $p\geq 1,s>0$ and two functions $f, g :
\mathbb{R}^n \rightarrow [0,\infty)$ with nonempty support, we define
$$
\tilde{\mathcal{S}}_{p,s}(f ; g)=\frac{p}{n+s} \lim _{\varepsilon \rightarrow 0^{+}} \frac{\int \left[f \oplus_{p,s}\left(\varepsilon \times_{p,s} g\right)\right]-\int {}f}{\varepsilon}
$$
whenever the integrals are defined and the limit exists. The motivation of this definition is from the defintion of $L_p$ mixed volume, see \cite{MR1231704}. When $f$ and $g$ is $s$-concave, this limit exists, see the proof in the following Corollary.

\begin{cor}Let $s > 0$ and $f, g :
\mathbb{R}^n \rightarrow [0,\infty)$ be integrable functions with nonempty support
such that $
\tilde{\mathcal{S}}_{p,s}(f ; g)$ exists. Then,
\begin{equation}\label{lpfunctionmix}
\tilde{\mathcal{S}}_{p,s}(f ; g) \geq\left(\int f\right)^{1-\frac{p}{n+s}}\left(\int  g\right)^{\frac{p}{n+s}}.
\end{equation}
If $s$ is an integer, $f = \lambda \times_{p,s} g,$~where$~\lambda> 0$, and $g$ is s-concave such that $\operatorname{supp} g$ has nonempty interior and contains the origin, then equality holds.
\end{cor}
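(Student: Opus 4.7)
The plan is to reduce the inequality (\ref{lpfunctionmix}) to a first-order expansion of the conclusion of Theorem \ref{LpBMs}, applied with $\mu = 1$ and $\omega = \varepsilon$. First I would set
$$h_\varepsilon := f \oplus_{p,s}(\varepsilon \times_{p,s} g).$$
Unwinding the definition of $\oplus_{p,s}$ together with the scaling $[\varepsilon \times_{p,s} g](\varepsilon^{1/p} y)^{1/s} = \varepsilon^{1/p} g(y)^{1/s}$ shows that $h_\varepsilon$ satisfies the pointwise hypothesis (\ref{FLpBMIcondition}) with $\mu = 1$, $\omega = \varepsilon$. Applying Theorem \ref{LpBMs} therefore yields
$$\left(\int h_\varepsilon\right)^{p/(n+s)} \geq \left(\int f\right)^{p/(n+s)} + \varepsilon \left(\int g\right)^{p/(n+s)}.$$

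Next I would raise both sides to the power $(n+s)/p$ and use the Taylor expansion
$$(A + B\varepsilon)^{(n+s)/p} = A^{(n+s)/p} + \tfrac{n+s}{p}\, A^{(n+s)/p - 1} B\, \varepsilon + o(\varepsilon),$$
with $A = (\int f)^{p/(n+s)}$ and $B = (\int g)^{p/(n+s)}$, to obtain
$$\int h_\varepsilon - \int f \geq \tfrac{n+s}{p}\left(\int f\right)^{1 - p/(n+s)}\left(\int g\right)^{p/(n+s)} \varepsilon + o(\varepsilon).$$
Dividing by $\varepsilon$, multiplying by $p/(n+s)$, and taking $\varepsilon \to 0^+$ under the hypothesis that the limit exists delivers (\ref{lpfunctionmix}).

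For the equality clause, assume $s \in \mathbb{N}$, $g$ is $s$-concave with $\operatorname{supp} g$ containing the origin and having nonempty interior, and $f = \lambda \times_{p,s} g$. Then $\mathcal{K}_g$ is a convex body in $\mathbb{R}^{n+s}$ containing the origin with nonempty interior, and the identity following (\ref{lambdatimes}) gives $\mathcal{K}_{\lambda \times_{p,s} g} = \lambda^{1/p}\mathcal{K}_g$ and $\mathcal{K}_{\varepsilon \times_{p,s} g} = \varepsilon^{1/p}\mathcal{K}_g$. The support-function definition (\ref{lpsumconvexbody}) immediately yields
$$\mathcal{K}_{\lambda \times_{p,s} g} +_p \mathcal{K}_{\varepsilon \times_{p,s} g} = (\lambda + \varepsilon)^{1/p} \mathcal{K}_g.$$
By Lemma \ref{convex}(ii) the function $h_\varepsilon$ is $s$-concave, so $\mathcal{K}_{h_\varepsilon}$ is convex, and by Remark \ref{f+pgsubset} it agrees with $\mathcal{K}_{\lambda \times_{p,s} g} +_p \mathcal{K}_{\varepsilon \times_{p,s} g}$ up to a boundary set, hence has the same Lebesgue measure. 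Combining with (\ref{Kfvolume}) gives
$$\int h_\varepsilon = (\lambda + \varepsilon)^{(n+s)/p} \int g,$$
so direct differentiation at $\varepsilon = 0$ yields $\tilde{\mathcal{S}}_{p,s}(f;g) = \lambda^{(n+s)/p - 1}\int g$, which matches the right-hand side of (\ref{lpfunctionmix}) after substituting $\int f = \lambda^{(n+s)/p}\int g$.

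The step requiring the most care is verifying that $h_\varepsilon$ meets the hypothesis of Theorem \ref{LpBMs} — specifically, that the sup in the definition of $\oplus_{p,s}$ is compatible with the factor $\varepsilon^{1/p}$ absorbed through $\times_{p,s}$; this is where the unusual powers $\mu^{1/p}, \omega^{1/p}$ in (\ref{FLpBMIcondition}) are essential. Everything else is either the first-order expansion or the convex-body identification via $\mathcal{K}$, both of which are routine given the lemmas already at hand.
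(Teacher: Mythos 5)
Your proposal is correct and follows essentially the same route as the paper: apply Theorem \ref{LpBMs} (equivalently, display (\ref{Lpf+pg})) with weights $\mu=1$, $\omega=\varepsilon$ to $h_\varepsilon = f\oplus_{p,s}(\varepsilon\times_{p,s}g)$, expand $(A+B\varepsilon)^{(n+s)/p}$ to first order, and pass to the limit; the equality case via $\mathcal{K}_{h_\varepsilon}$ having the same interior as $(\lambda+\varepsilon)^{1/p}\mathcal{K}_g$ is also the paper's argument. The only cosmetic difference is that the paper uses the exact tangent-line inequality from convexity of $t\mapsto t^{(n+s)/p}$ where you use a Taylor expansion with $o(\varepsilon)$; both suffice.
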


\begin{proof} By (\ref{Lpf+pg}),
\begin{align}\label{lpfge}
\int \left[f \oplus_{p,s}\left(\varepsilon \times_{p,s} g\right)\right] & \geq\left(\left(\int  f\right)^{\frac{p}{n+s}}+\varepsilon\left(\int g\right)^{\frac{p}{n+s}}\right)^\frac{n+s}{p} \\
& \geq\left(\int  f\right)+\varepsilon\cdot\frac{n+s}{p}\left(\int f\right)^{1-\frac{p}{n+s}}\left(\int g\right)^{\frac{p}{n+s}}.\notag
\end{align}
Since $
\tilde{\mathcal{S}}_{p,s}(f ; g)$ exists, the definition of $
\tilde{\mathcal{S}}_{p,s}(f ; g)$ implies the desired inequality.

If $s$ is an integer, $f = \lambda \times_{p,s} g$ and $g$ is s-concave with $\operatorname{supp} g$ such that $\operatorname{supp} g$ has nonempty interior and contains the origin, then $\mathcal{K}_f$ and $\mathcal{K}_{[\varepsilon\times_{p,s}g]}$ are convex bodies containing the origin for $\varepsilon>0$, and $\mathcal{K}_f=\lambda^\frac{1}{p}\mathcal{K}_g$.

 By (\ref{lpsumconvexbody}) and the homogeneity of support function,
$$h_{\mathcal{K}_f+_p\mathcal{K}_{[\varepsilon\times_{p,s}g]}}^p(x)=h_{\mathcal{K}_f}^p(x)+_ph_{\mathcal{K}_{[\varepsilon\times_{p,s}g]}}^p(x)
=h_{\lambda^\frac{1}{p}\mathcal{K}_g}^p(x)+h_{\varepsilon^\frac{1}{p}\mathcal{K}_g}^p(x)=(\lambda+\varepsilon)h^p_{\mathcal{K}_g}(x)$$
for all $x \in\mathbb{R}^n$, which shows that
$\mathcal{K}_f+_p\mathcal{K}_{[\varepsilon\times_{p,s}g]}=(\lambda+\varepsilon)^\frac{1}{p}\mathcal{K}_g$ is a convex body.
It follows from Remark \ref{f+pgsubset} that
$$\operatorname{int} \mathcal{K}_{f\oplus_{p,s} [\varepsilon \times_{p,s}g]}= \operatorname{int} \left(\mathcal{K}_{f}+_p\mathcal{K}_{[\varepsilon\times_{p,s}g]}\right)
=\operatorname {int}\left((\lambda+\varepsilon)^\frac{1}{p} \mathcal{K}_g)\right).$$
Since these sets are convex, we get that
$$V_{n+s}\left(\mathcal{K}_{f\oplus_{p,s} [\varepsilon \times_{p,s}g]}\right)=(\lambda+\varepsilon)^\frac{n+s}{p}V_{n+s}\left(\mathcal{K}_{g}\right).$$
Therefore, by (\ref{Kfvolume}),
$$\begin{aligned}\int \left[f \oplus_{p,s}\left(\varepsilon \times_{p,s} g\right)\right]-\int  f&=\frac{V_{n+s}\left(\mathcal{K}_{f\oplus_{p,s} [\varepsilon \times_{p,s}g]}\right)-V_{n+s}(\mathcal{K}_f)}{\kappa_s}\\
&=\left((\lambda+\varepsilon)^\frac{n+s}{p}-\lambda^\frac{n+s}{p}\right)\frac{V_{n+s}(\mathcal{K}_g)}{\kappa_s}\\
&=\left((\lambda+\varepsilon)^\frac{n+s}{p}-\lambda^\frac{n+s}{p}\right)\int g.\end{aligned}$$
Now, by the definition $\tilde{\mathcal{S}}_{p,s}(f ; g)$ and the condition that $f = \lambda \times_{p,s} g=\lambda ^\frac{s}{p}f(\frac{x}{\lambda^p})$, we have
 $$\begin{aligned}\tilde{\mathcal{S}}_{p,s}(f ; g)&= \frac{p}{n+s}\lim_{\varepsilon\rightarrow 0^+}\frac{(\lambda+\varepsilon)^\frac{n+s}{p}-\varepsilon^\frac{n+s}{p}}{\varepsilon}\int g\\
 &=\lambda^{\frac{n+s}{p}-1}\int  g=\left(\int f\right)^{1-\frac{p}{n+s}}\left(\int  g\right)^{\frac{p}{n+s}}.\end{aligned}$$

\end{proof}

\end{document}